\newtheorem{definition}{Definition}[section]
\newtheorem{theorem}[definition]{Theorem}
\newtheorem{remark}[definition]{Remark}
\newtheorem{proposition}[definition]{Proposition}
\newtheorem{lemma}[definition]{Lemma}
\numberwithin{equation}{section}
\DeclarePairedDelimiter\abs{\lvert}{\rvert}%
\DeclarePairedDelimiter\norm{\lVert}{\rVert}%
\let\oldabs\abs
\def\abs{\@ifstar{\oldabs}{\oldabs*}}
\let\oldnorm\norm
\def\norm{\@ifstar{\oldnorm}{\oldnorm*}}
\newcommand{\al} {\alpha}
\newcommand{\De} {\Delta}
\newcommand{\ga} {\gamma}
\newcommand{\Om} {\Omega}
\newcommand{\la} {\lambda}
\newcommand{\Gr} {\nabla}
\newcommand{\no} {\nonumber}
\newcommand{\var} {\varepsilon}
\newcommand{\ra} {\rightarrow}
\newcommand{\wide}[1]{\widetilde{#1}}
\newcommand\restr[2]{{
  \left.\kern-\nulldelimiterspace 
  #1 
  \right|_{#2} 
  }}
\def\w{{\widetilde w}}
\def\dx{{\rm d}x}
\def\dy{{\rm d}y}
\def\inpr#1{\left\langle #1\right\rangle}
\def\sb2{{{\mathcal D}^{1,2}_0(B_1^c)}}
\def\wpo{{{ W}^{1,p}_{0}}(\Om)}
\def\wp{{{ W}^{1,p}_{0}}}
\def\w2{{{ W}^{1,2}_{0}}}
\def\A{{\mathcal A}}
\def\C{{\mathcal C}}
\def\E{{\mathcal E}}
\def\K{{\mathcal K}}
\def\M{{\mathcal M}}
\def\S{{\mathcal S}}
\def\R{{\mathbb R}}
\def\N{{\mathbb N}}
\def\F{{\mathcal F}}
\def\O{{\mathcal O}}
\def\({{\Big(}}
\def\){{\Big)}}
\def\ws2{{\F_{\frac{N}{2}}}}
\def\l2{{ L^{1,\;\infty}(\log L)^2}}
\def\M2{\mathcal M_{\log L}}
\title{On the structure of the second eigenfunctions of the $p$-Laplacian on a ball}
\author{T. V. Anoop \footnote{This work is funded by the
project EXLIZ - CZ.1.07/2.3.00/30.0013, which is co-financed by the European Social Fund and the state budget of the
Czech Republic.}\,,
P. Dr\'{a}bek  \footnote{The author was supported
by the
Grant Agency of Czech Republic,
Project No. 13-00863S.
\clearpage \vspace{-5mm} \hspace{+0.2mm}
$^\dag$Corresponding author, Email: pdrabek@kma.zcu.cz, Tel.: +420 377632648.
\clearpage \vspace{-5mm}\hspace{+0.2mm}
$^*$Email:\;anoop@ntis.zcu.cz,  
sasi@ntis.zcu.cz }\;,  Sarath 
Sasi \footnotemark[1]\, \\
 \small{Department of Mathematics and NTIS,
Faculty of Applied Sciences,} \\
\small{University of West Bohemia, Univerzitn\'{i} 8, 306 14 Plze\v{n}, Czech
Republic.}
}
\date{}
\begin{document}
\maketitle
\begin{abstract}
In this paper, we prove that the second eigenfunctions of the
$p$-Laplacian, $p>1$, are not radial on the unit ball in $\R^N,$ for any $N\ge 2.$  Our proof
relies on the  variational characterization of the second eigenvalue and a variant of the deformation lemma. We also construct an infinite sequence
of eigenpairs $\{\tau_n,\Psi_n\}$ such that $\Psi_n$ is nonradial and has exactly $2n$ nodal domains.
A few related open problems are also stated.
\end{abstract}
\noindent
{\bf Mathematics Subject Classification (2010):}  35J92, 35P30, 35B06, 49R05.\\
{\bf Keywords:}
$p$-Laplacian, nonlinear eigenvalue problem, symmetry properties, shape derivative, variational characterization.
\section{Introduction}
Let $B_1\subset \R^N$ be the open unit ball centred at the
origin. We consider the following eigenvalue problem:
\begin{align}\label{evp}
 -\De_p u &= \la |u|^{p-2}u\quad \mbox{ in } B_1, \no\\
  u &= 0 \quad \mbox{ on } \partial B_1,
\end{align}
where $\De_p u:= {\rm{div}}(|\Gr u|^{p-2}\Gr u)$ is the $p$-Laplace operator with
$p>1$ and $\la$ is the spectral parameter. 
A real number $\la$ for which
\eqref{evp} admits a non-zero weak solution in $\wp(B_1)$ is called an eigenvalue of \eqref{evp} and corresponding solutions are called the eigenfunctions
associated with $\la.$ 

For $p=2,$ it is well known that the set of all eigenvalues of \eqref{evp}
can be arranged in a sequence 
   $$0<\la_1<\la_2\le \la_3\ldots \ra \infty $$
and the corresponding normalized eigenfunctions form an orthonormal basis
for the Sobolev space $\w2(B_1)$.  Further, using the Courant-Weinstein
variational principle (Theorem 7.8.14 of \cite{Drabek-book2}), these
eigenvalues can be expressed as follows:
\begin{align*}
  \la_k:= \inf_{\{u \perp \{u_1,\ldots,u_{k-1}\}, \norm{u}_2=1\}}\int_{B_1} |\Gr u|^2\, {\dx}, \;
k=1,2,3,\ldots,
\end{align*}
where $u_i$ is an eigenfunction corresponding to $\la_i.$ 
For $p\ne 2,$ using Ljusternik-Schnirelman theorem, an infinite 
sequence $\{\mu_n\}$ of eigenvalues of \eqref{evp} is   
provided in \cite{Azorero}. Possibly a different sequence $\{\la_n\}$
of variational eigenvalues of \eqref{evp} is 
provided  in \cite{Drabek-Resonance}. We stress that  a 
complete description of the set of all eigenvalues of \eqref{evp} for $p\ne
2$ is a
challenging open problem. Nevertheless, a complete description of the set
of all
radial eigenvalues $\{\ga_n\}$ (eigenvalue with a radial eigenfunction) of
\eqref{evp} is
given in \cite{DelPino}. The authors of \cite{DelPino} showed that $\la$ is a radial eigenvalue
of \eqref{evp} if and only if the following ODE has a non-zero solution:
\begin{align}\label{rad_evp}
 -\big(r^{N-1}\,|u'(r)|^{p-2}u'(r)\big)'&=\la r^{N-1}\,|u(r)|^{p-2}u(r)
\quad \mbox { in } (0,1), \no \\
 u'(0)=0 &,\quad  u(1)=0. 
\end{align}
Regardless of the methods by which the eigenvalues are obtained, one can
uniquely identify the first two eigenvalues of
\eqref{evp} as below: 
\begin{align*}
 \la_1 &=\min \{\la: \la \mbox{ is an eigenvalue of } \eqref{evp} \},\\
 \la_2 &=\min \{\la>\la_1: \la \mbox{ is an eigenvalue of } \eqref{evp} \}.
\end{align*}

It is well known that the eigenfunctions corresponding to $\la_1$ are
radial and  keep the same sign  on $B_1.$ All other eigenfunctions change its
sign on $B_1.$ The structure of the
second eigenfunctions are not well understood, except for $p=2.$ In this case,
the Fourier method for the Laplacian in the  polar coordinates gives
the precise form of the second eigenfunctions. In particular, it is evident
that the second eigenfunctions are not radial. One anticipates the same results
also for $p\neq 2.$ 

In \cite{Parini}, Parini proved that the  second eigenfunctions
are not radial in a
special case, where $B_1$ is the disc
($B_1\subset \R^2$) and $p$ is close to 1.
In \cite{Benedikt}, this
result is extended for every $p\in(1,\infty)$ using a computer aided proof.
Indeed, these methods are not readily extendable to
dimensions greater than 2. Here, we give a simple analytic proof for their
result which works in all dimensions ($N\ge 2$)
and for every $p\in(1,\infty)$. Our proof relies on the variational characterization of $\la_2$ given in
\cite{Drabek-Resonance} and a variation of the deformation lemma given in  \cite{Ghoussoub}. We also use a result from \cite{Anisa} 
that states that for a fixed $r\in (0,1),$
$$\la_1(B_1\setminus \overline{B_r(x)})\le \la_1(B_1\setminus \overline{B_r(0)}),$$
where $B_r(x) \subset B_1$ is the ball with centre $x$ and radius $r$. Now we state our main result:
\begin{theorem}\label{Theorem1}
 Let $B_1$ be the unit ball centred at the origin in $\R^N$ with $N\ge 2$ and
let
$1<p<\infty.$ Let $\la_2$ be the second eigenvalue of \eqref{evp}. Then the
eigenfunctions corresponding to $\la_2$ are not radial.
\end{theorem}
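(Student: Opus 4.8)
The plan is to argue by contradiction: suppose every second eigenfunction of \eqref{evp} is radial, and derive a contradiction by producing a test function whose Rayleigh quotient is strictly below $\la_2$ while satisfying the constraints in the variational characterization of $\la_2$ from \cite{Drabek-Resonance}. Recall that this characterization expresses $\la_2$ as an infimum (over a suitable class of symmetric sets or odd maps on the sphere $S^1$ into the unit sphere of $\wp(B_1)$, after removing the first eigenfunction direction) of a max of Rayleigh quotients along a path connecting a function to its negative. The key structural input is that if $u$ is a radial second eigenfunction, then by the ODE description \eqref{rad_evp} from \cite{DelPino}, $u$ has exactly one interior zero, i.e. a nodal sphere $\{|x|=r_0\}$ for some $r_0\in(0,1)$, so $B_1$ is split into the inner ball $B_{r_0}(0)$ and the annulus $B_1\setminus\overline{B_{r_0}(0)}$, and $\la_2=\la_1(B_{r_0}(0))=\la_1(B_1\setminus\overline{B_{r_0}(0)})$ where $\la_1(\cdot)$ denotes the first Dirichlet eigenvalue.

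Next I would break the symmetry of the nodal set. The idea is: instead of the concentric partition, consider partitioning $B_1$ by an off-center ball $B_r(x)$ together with its complement $B_1\setminus\overline{B_r(x)}$, where $B_r(x)\subset B_1$. On the piece $B_r(x)$ the relevant first eigenvalue $\la_1(B_r(x))=\la_1(B_r(0))$ is unchanged (translation invariance for a ball of fixed radius), while on the complementary annular-type region \cite{Anisa} gives $\la_1(B_1\setminus\overline{B_r(x)})\le \la_1(B_1\setminus\overline{B_r(0)})$. Choosing $r$ slightly larger than $r_0$, and $x$ appropriately, one can arrange \emph{both} first eigenvalues of the two pieces to be strictly less than $\la_2$: the inner ball because enlarging $B_{r_0}(0)$ to $B_r(0)$ (hence $B_r(x)$) strictly lowers its first eigenvalue by strict domain monotonicity, and the outer region by the inequality from \cite{Anisa} combined with the fact that shrinking the concentric annulus $B_1\setminus\overline{B_{r_0}(0)}$ to $B_1\setminus\overline{B_r(0)}$ already strictly lowers $\la_1$. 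Gluing the two (sign-opposite, $\DBc$-extended by zero) first eigenfunctions of $B_r(x)$ and $B_1\setminus\overline{B_r(x)}$ produces a function $w$, sign-changing, whose positive and negative parts are disjointly supported, with $\int_{B_1}|\Gr w|^p/\int_{B_1}|w|^p$ controlled by $\max$ of the two first eigenvalues, hence strictly below $\la_2$.

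To turn $w$ into an admissible competitor in the variational characterization of $\la_2$, the remaining work is (i) to ensure the orthogonality/normalization condition relative to the first eigenfunction $\phi_1$ — since $w$ changes sign and $\phi_1>0$, one can rescale the positive and negative parts of $w$ by separate positive constants so that $\int_{B_1}|w|^{p-2}w\,\phi_1=0$ (or whatever the precise constraint is), and check this does not increase the Rayleigh quotient past $\la_2$; and (ii) to build the required path or odd map from $S^1$: because $w_+$ and $w_-$ have disjoint supports, the one-parameter family $w_t:=\cos(\pi t)\,\tilde w_+ - \sin(\pi t)$-type interpolation, or more simply the family $\{s w_+ - (1-s)w_- : s\in[0,1]\}$ suitably normalized, connects $w$ to $-w$ while the Rayleigh quotient along it never exceeds $\max\{\la_1(B_r(x)),\la_1(B_1\setminus\overline{B_r(x)})\}<\la_2$; applying the variant of the deformation lemma from \cite{Ghoussoub} then forces $\la_2$ to be at most this strict upper bound, a contradiction. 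I expect the main obstacle to be step (ii): making the path construction genuinely compatible with the precise form of the minimax class used in \cite{Drabek-Resonance} (odd continuous maps from $S^1$, the correct codimension condition encoding ``$>\la_1$''), and verifying uniformly along the path that the normalization against $\phi_1$ can be maintained without the Rayleigh quotient creeping up to $\la_2$; the deformation lemma is what lets us avoid constructing an actual eigenfunction and instead only needs an approximate critical level, but wiring its hypotheses to this concrete family is the delicate part.
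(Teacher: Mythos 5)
Your overall strategy---glue the first eigenfunctions of an off-centre ball and of its complement, feed the resulting two-parameter family into the minimax characterization \eqref{var2}, and invoke the deformation lemma---has the same skeleton as the paper's proof, but the step that is supposed to produce the \emph{strict} inequality is broken, and that is precisely where all the difficulty lives. You claim that by taking $r$ slightly larger than the nodal radius $r_0$ one can make \emph{both} $\la_1(B_r(x))$ and $\la_1(B_1\setminus\overline{B_r(x)})$ strictly less than $\la_2$, justifying the second by saying that shrinking the concentric annulus $B_1\setminus\overline{B_{r_0}(0)}$ to $B_1\setminus\overline{B_r(0)}$ ``already strictly lowers $\la_1$.'' The monotonicity goes the other way: shrinking a domain \emph{raises} the first Dirichlet eigenvalue, so for $r>r_0$ one has $\la_1(B_1\setminus\overline{B_r(0)})>\la_1(B_1\setminus\overline{B_{r_0}(0)})=\la_2$. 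The inequality from \cite{Anisa} then only bounds $\la_1(B_1\setminus\overline{B_r(x)})$ by a quantity that is strictly \emph{above} $\la_2$, so no conclusion follows; the two pieces' eigenvalues move in opposite directions as $r$ varies, and no quantitative version of \cite{Anisa} is available to show that off-centring compensates for the loss on the annular piece. The best you can actually guarantee is at $r=r_0$: the ball contributes exactly $\la_2$ and the complement at most $\la_2$, so your family only achieves $\sup J\le\la_2$, which is vacuous as an upper bound for the infimum in \eqref{var2}.

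This is exactly the obstruction the paper works around. It keeps $r$ fixed at the critical radius, considers the sequence of families $\A_n$ obtained by sliding the ball towards the boundary (each with $\sup J=\ga_2$, \emph{not} less), and proves that some $\A_{n_0}$ contains no critical point of $J$; only then does the deformation lemma convert the non-strict bound into a strict one. That no-critical-point step is the heart of the argument: if every $\A_n$ contained an eigenfunction, a compactness argument would produce, in the limit configuration where the ball is tangent to $\partial B_1$ at $e_1$, an eigenfunction whose derivative at $e_1$ is strictly negative by Hopf's lemma (\cite{Vazquez}) while the outward normal derivative at all nearby boundary points is nonnegative, contradicting the $\C^{1,\al}(\overline{B_1})$ regularity of eigenfunctions. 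Your proposal contains no substitute for this argument, and the deformation lemma alone cannot supply the missing strictness because you have not shown your family is free of critical points.
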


In this paper we also construct a sequence $\{\tau_n,\Psi_n\}$ of
eigenpairs of \eqref{evp} such that the eigenfunction $\Psi_n$ is nonradial and
has exactly $2n$ nodal domains. Furthermore, the sequence $\{\tau_n\}$ is
strictly increasing and unbounded. 
In fact the nodal domains can be specified using 
the spherical coordinate system for $\R^N$ which consists of a radial coordinate $r$ and
angular coordinates $\theta_1,\ldots,\theta_{N-1}$ where
$\theta_1,\ldots,\theta_{N-2} \in [0,\pi]$ and $\theta_{N-1}\in[0,2\pi).$ 
By a sector of the ball $B_1$ we mean the set $S$ given by
$S=\{x\in B_1: 0<\theta_*<\theta_{N-1}<\theta^*<2\pi \}.$
We prove the following theorem.
\begin{theorem}\label{Theorem2}
Let $B_1\subset \R^N.$ Then for each $n\in \N$  there exists an
eigenpair $\{\tau_n,\Psi_n\}$  of \eqref{evp} such that $\Psi_n$ has exactly
$2n$ nodal domains where each nodal domain is a sector with measure
$\frac{|B_1|}{2n}.$
\end{theorem}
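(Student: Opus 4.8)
The plan is to construct $\Psi_n$ explicitly, by reflecting the first Dirichlet eigenfunction of a sector of opening angle $\pi/n$. Using the spherical coordinates introduced above, partition $B_1$ into the $2n$ congruent sectors
\[
 S_j \;=\; \Big\{x\in B_1:\ (j-1)\tfrac{\pi}{n}<\theta_{N-1}<j\tfrac{\pi}{n}\Big\},\qquad j=1,\dots,2n,
\]
each of measure $\dfrac{\pi/n}{2\pi}\,|B_1|=\dfrac{|B_1|}{2n}$. Let $\tau_n:=\la_1(S_1)>0$ be the first (Dirichlet) eigenvalue of $-\Dep$ on $S_1$ and let $\phi$ be an associated eigenfunction, normalised so that $\phi>0$ in $S_1$ (possible since $\la_1(S_1)$ is simple and admits a one-signed eigenfunction, which is positive in the interior by Harnack's inequality); moreover $\phi\in L^\infty(S_1)$ and $\phi$ is of class $C^{1,\alpha}$ up to the smooth part of $\partial S_1$, and by simplicity of $\la_1(S_1)$ together with the mirror symmetry of $S_1$ the function $\phi$ is symmetric with respect to the hyperplane bisecting $S_1$. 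Now set, on $S_j$,
\[
 \Psi_n\big|_{S_j}(r,\theta_1,\dots,\theta_{N-2},\theta_{N-1}) \;=\; (-1)^{j-1}\,\phi\big(r,\theta_1,\dots,\theta_{N-2},\ \theta_{N-1}-(j-1)\tfrac{\pi}{n}\big);
\]
equivalently, $\Psi_n\big|_{S_{j+1}}$ is obtained from $\Psi_n\big|_{S_j}$ by reflecting across the hyperplane carrying the common face $\Gamma_j:=\{\theta_{N-1}=j\pi/n\}\cap B_1$ and reversing the sign (here the symmetry of $\phi$ is what makes the two descriptions agree). Because $-\Dep$ and $u\mapsto|u|^{p-2}u$ are invariant under isometries of $\R^N$ and under $u\mapsto-u$, each $\Psi_n\big|_{S_j}$ is a weak solution of $-\Dep u=\tau_n|u|^{p-2}u$ in $S_j$ that vanishes on $\partial S_j$; and since $2n$ is even the definition is consistent around the azimuthal period $2\pi$.

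The second step is to verify that $\Psi_n$ constructed this way is a genuine eigenfunction on all of $B_1$. Continuity across each $\Gamma_j$ is immediate because $\Psi_n$ vanishes there from both sides; together with $\Psi_n\big|_{S_j}\in\wp(S_j)$ and $\Psi_n=0$ on $\partial B_1$ this gives $\Psi_n\in\wp(B_1)$, and $\Psi_n\not\equiv0$. To see that $\Psi_n$ solves \eqref{evp} weakly with parameter $\tau_n$, fix $v\in\cc(B_1)$, split $\int_{B_1}|\Gr\Psi_n|^{p-2}\Gr\Psi_n\cdot\Gr v\,\dx$ as a sum over the $S_j$, and integrate by parts on each sector using the equation satisfied there; the boundary contributions on $\partial B_1$ vanish because $v$ has compact support, so only the interface terms $\int_{\Gamma_j}|\Gr\Psi_n|^{p-2}\frac{\partial\Psi_n}{\partial\nu}\,v\,\dS$ survive. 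On $\Gamma_j$ the function $\Psi_n$ is identically zero, hence its tangential derivatives vanish and $\Gr\Psi_n$ reduces there to its component normal to $\Gamma_j$; the reflect-and-change-sign construction forces this normal component — and therefore the field $|\Gr\Psi_n|^{p-2}\Gr\Psi_n$ — to take the same value when computed from $S_j$ and from $S_{j+1}$, while the two outward unit normals of $\Gamma_j$ are opposite. The interface contributions thus cancel in pairs, leaving $\int_{B_1}|\Gr\Psi_n|^{p-2}\Gr\Psi_n\cdot\Gr v\,\dx=\tau_n\int_{B_1}|\Psi_n|^{p-2}\Psi_n\,v\,\dx$ for all $v\in\cc(B_1)$, hence for all $v\in\wp(B_1)$ by density; so $\{\tau_n,\Psi_n\}$ is an eigenpair of \eqref{evp}. (Alternatively, $\Psi_n$ minimises the Rayleigh quotient over the closed subspace of $\wp(B_1)$ picked out by the dihedral symmetry of the configuration — invariance under rotation by $2\pi/n$ about the polar axis together with a change of sign under reflection across each $\Gamma_j$ — and one invokes the principle of symmetric criticality.)

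Finally one reads off the nodal structure. Since $\phi>0$ in $S_1$, each $\Psi_n\big|_{S_j}$ is of constant sign and nonzero in the interior of $S_j$, consecutive sectors carry opposite signs, and the zero set of $\Psi_n$ is exactly $\bigcup_{j=1}^{2n}\Gamma_j$; consequently the nodal domains of $\Psi_n$ are precisely $S_1,\dots,S_{2n}$ — exactly $2n$ of them, each a sector of measure $|B_1|/(2n)$ — which is the assertion of the theorem. For the two supplementary claims in the text: up to a rotation $S_{n+1}$ is a proper subdomain of $S_n$, so $\wp(S_{n+1})\subset\wp(S_n)$ yields $\tau_{n+1}\ge\tau_n$, with strict inequality since a first eigenfunction of $S_n$ cannot vanish on the nonempty open set $S_n\setminus\overline{S_{n+1}}$; and the one-dimensional Poincar\'e inequality applied in $\theta_{N-1}$ over intervals of length $\pi/n$ — using $|\partial_{\theta_{N-1}}u|\le|\Gr u|$ pointwise in $B_1$, since the metric factor $r^{-2}(\sin\theta_1\cdots\sin\theta_{N-2})^{-2}$ is $\ge1$ there — gives $\tau_n=\la_1(S_n)\ge c_p\,n^p\to\infty$ for a constant $c_p>0$ depending only on $p$.

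The delicate point is the interface analysis in the second step: one has to be sure that the conormal fields $|\Gr\Psi_n|^{p-2}\Gr\Psi_n$ coming from the two sides of each $\Gamma_j$ really coincide, and that the lower-dimensional set where the spherical coordinates degenerate — the polar axis together with the circle $\partial B_1\cap\overline{S_j}$ — contributes nothing to the boundary terms. The matching is exactly what forces the alternation of signs in the construction, and it is what lets the single-sector first eigenfunction be glued into a solution on the whole ball; the irrelevance of the degenerate set follows from the $C^{1,\alpha}$ regularity of $\phi$ up to the smooth part of $\partial S_1$ and from the fact that this set is $(N-1)$-dimensional null on the interfaces (while $v\in\cc(B_1)$ already vanishes near $\partial B_1$).
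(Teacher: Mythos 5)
Your construction is exactly the paper's: the same sectors of opening $\pi/n$, the same $\tau_n=\la_1(S_1)$, and the same sign--alternating reflection of the positive first eigenfunction; the nodal count at the end is also handled the same way. The divergence, and the problem, is in how you certify that the glued function is a weak solution on all of $B_1$. Your main argument integrates by parts on each sector and cancels the conormal contributions $\int_{\Gamma_j}|\Gr\Psi_n|^{p-2}\frac{\partial\Psi_n}{\partial\nu}v\,\dS$ in pairs. For $p\ne 2$ this step is not justified as written: an eigenfunction of the $p$-Laplacian is only $C^{1,\al}$, so the field $A=|\Gr\Psi_n|^{p-2}\Gr\Psi_n$ is merely (H\"older) continuous with a distributional divergence, it need not lie in $W^{1,1}(S_j)$, and the classical divergence theorem does not apply to it. One would have to invoke a generalized Gauss--Green formula for divergence-measure fields and, separately, boundary regularity of $\phi$ up to the flat faces of $S_1$ --- which does not follow from the Lieberman-type statement (Proposition \ref{regularity}) because $\partial S_1$ is not smooth: it has edges along the polar axis and where the faces meet $\partial B_1$, and your appeal to the degenerate set being $(N-1)$-null on $\Gamma_j$ does not by itself control the boundary terms there without some uniform bound on $|\Gr\Psi_n|^{p-1}$ near the edges. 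So ``only the interface terms survive and they cancel'' is a formal computation, not a proof, at the one place where the theorem actually needs an argument. Your parenthetical alternative (minimisation over the dihedrally symmetric class plus symmetric criticality) is a legitimate rescue, but it too is only a one-line sketch: one must verify Palais' principle for this group action on $\wp(B_1)$ and then identify the symmetric infimum with $\la_1(S_1)$ and its minimiser with $\Psi_n$.

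The paper avoids all of this with Lemma \ref{gluing}: the odd reflection of a weak solution across a symmetry hyperplane is shown to be a weak solution of the doubled domain purely by a change of variables in the weak formulation (test against $\phi$, pass to $\psi(x)=\phi(x)-\phi(\sigma_H(x))\in\wp(\O^+)$), using nothing beyond $u\in\wp(\O^+)$ --- no normal derivatives, no boundary regularity, no Gauss--Green. The passage from ``solution on each pair of adjacent sectors'' to ``solution on the half-ball'' is then done with a partition of unity subordinate to the overlapping double sectors $U_i$, and one final reflection gives $\Psi_n$ on $B_1$. If you replace your interface analysis by this reflection identity in the weak formulation (or fully write out the symmetric-criticality route), the rest of your proposal stands; your supplementary observations that $\tau_n$ is strictly increasing (domain monotonicity plus positivity of the first eigenfunction) and that $\tau_n\gtrsim n^p$ (one-dimensional Poincar\'e in $\theta_{N-1}$) are correct and are not even proved in the paper, which only asserts them in the introduction.
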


The rest of this paper is organized as follows. In Section 2, we consider Dirichlet eigenvalue for $p$-Laplacian on a 
general domain and discuss the existence and the regularity properties of the eigenfunctions. We also discuss the variational 
characterizations of eigenvalues and state a version of the deformation lemma.  In Section 3, we give a proof for  Theorem
\ref{Theorem1}. The last section consists of a proof of Theorem \ref{Theorem2}
and some important open problems related to eigenvalues of $p$-Laplacian.

\section{Preliminary}
In this section we consider the eigenvalue problem on a  bounded domain $\Om$ in $\R^N:$ 
\begin{align}\label{genevp}
 -\De_p u &= \la |u|^{p-2}u\quad \mbox{ in } \Om, \no\\
  u &= 0 \quad \mbox{ on } \partial \Om.
\end{align}
We discuss the existence and regularity properties of the eigenfunctions of \eqref{genevp}.
If $\la$ is an eigenvalue of \eqref{genevp} and $u\in \wpo$ is an associated eigenfunction, then we have
\begin{align}\label{weak-form}
  \int_{\Om} |\Gr u|^{p-2}\Gr u \cdot \Gr v\, \dx = \la \int_{\Om} |u|^{p-2} u v \, \dx,\quad \forall\, v\in \wpo.
\end{align}
Now we consider the following two functionals on $\wpo:$
$$J(u)=\int_\Om |\Gr u|^p \dx ,\qquad G(u)=\int_\Om | u|^p \dx .$$
Using the Lagrange multiplier theorem, it can be easily verified that the critical values and critical points of $J$ on the manifold $\S=G^{-1}(1)$ satisfy \eqref{weak-form}. Indeed, the eigenvalues of \eqref{genevp} and the critical values of $J$ on $S$ are one and the same. The least critical value of $J$ on $\S$ is given by 
$$\la_1=\inf_{u\in \S} J(u).$$  In the next proposition, we list some of the important properties of $\la_1$ and the corresponding eigenfunctions. 
\begin{proposition}\label{properties}
 Let $\la_1$ be the first eigenvalue of \eqref{genevp}. Then 
  \begin{itemize}
  \item [\rm{(i)}] $\la_1$ is simple
  \item [\rm{(ii)}] any eigenfunction corresponding to $\la_1$ keeps the same sign on $\Om,$
  \item [\rm{(iii)}] any eigenfunction corresponding  to an eigenvalue $\la>\la_1$ changes its sign on $\Om,$   
  \item [\rm{(iv)}] if $\;\Om=B_r(0)$, then the eigenfunctions corresponding to $\la_1$ are radial. 
 \end{itemize}
 
\end{proposition}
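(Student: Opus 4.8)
The plan is to establish the four statements in order, since each relies on machinery accumulated from the previous ones. For (i) and (ii), the central tool is the \emph{Picone identity} (or equivalently the strict convexity of the map $v \mapsto J(v^{1/p})$ on the cone of nonnegative functions, in the sense of Díaz–Saa). First I would show (ii): if $u$ is an eigenfunction for $\la_1$, then so is $|u|$, because $J(|u|)=J(u)$ and $G(|u|)=G(u)$, so $|u|$ is again a minimizer of $J$ on $\S$ and hence a nonnegative eigenfunction; the strong maximum principle of Vázquez then forces $|u|>0$ in $\Om$, so $u$ cannot vanish in the interior, i.e.\ $u$ keeps a strict sign. For (i), suppose $u_1,u_2$ are two positive eigenfunctions for $\la_1$ normalized in $\S$; applying Picone's inequality to the pair $(u_1,u_2)$ and to $(u_2,u_1)$ and adding (after using the weak formulation \eqref{weak-form} with the test functions $u_1^p/u_2^{p-1}$ and $u_2^p/u_1^{p-1}$, suitably truncated to be admissible) yields $\int_\Om \bigl(|\Gr u_1|^{p} + |\Gr u_2|^p - \text{cross terms}\bigr)\ge 0$ with equality forced, which by the equality case of Picone's identity gives $u_1 = c\, u_2$ for a constant $c$, and then normalization gives $c=1$.

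For (iii), let $\la>\la_1$ with eigenfunction $u$, and suppose for contradiction that $u\ge 0$ in $\Om$. By the strong maximum principle $u>0$. Now test \eqref{weak-form} (for the pair $(\la_1,\varphi_1)$, $\varphi_1>0$ the first eigenfunction) against $\varphi_1^p/u^{p-1}$ and test the equation for $u$ against $u$; Picone's inequality applied to $(\varphi_1,u)$ then yields $0\le \int_\Om |\Gr\varphi_1|^p - \la\int_\Om |\varphi_1|^p = (\la_1-\la)\int_\Om|\varphi_1|^p<0$, a contradiction. (The admissibility of the test function $\varphi_1^p/u^{p-1}$ requires the usual care near $\partial\Om$—one works with $(u+\eps)$ and lets $\eps\to0$, using Hopf-type boundary estimates.) Hence $u$ must change sign.

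For (iv), with $\Om=B_r(0)$, I would use the Schwarz symmetrization: given a positive eigenfunction $u$ for $\la_1$, its radial decreasing rearrangement $u^*$ satisfies $G(u^*)=G(u)=1$ and $J(u^*)\le J(u)=\la_1$ by the Pólya–Szegő inequality, so $u^*$ is also a minimizer and $J(u^*)=\la_1$. The equality case in Pólya–Szegő (Brothers–Ziemer), together with $u>0$ so that $u$ has no flat plateaus at positive values (which follows from $u$ solving an elliptic equation, hence being real-analytic where $\Gr u\neq0$, and from the strong maximum principle ruling out interior maxima on sets of positive measure), forces $u$ to be a translate of a radial decreasing function; since $u=0$ on $\partial B_r(0)$ pins the center at the origin, $u$ is radial. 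Alternatively, and perhaps more cleanly, one can invoke the moving-plane method of Gidas–Ni–Nirenberg as adapted to the $p$-Laplacian (Damascelli–Pacella, Badiale–Nabana), which directly gives radial symmetry and monotonicity of any positive solution on a ball.

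I expect the main obstacle to be the rigorous justification of the test-function manipulations in (i) and (iii): the quotients $\varphi_1^p/u^{p-1}$ are not a priori in $\wpo$, and making the Picone argument fully rigorous requires either a careful truncation/approximation scheme or the generalized Picone identities available in the literature (e.g.\ Allegretto–Huang). Everything else is, in effect, a citation of standard results (Vázquez's strong maximum principle, Anane's simplicity proof, Pólya–Szegő with the Brothers–Ziemer equality analysis, or the $p$-Laplacian moving-plane results), so the write-up can be kept brief by pointing to those references.
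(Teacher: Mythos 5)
Your arguments for (i)--(iii) are correct and are essentially the standard proofs that the paper simply cites rather than reproduces: the paper refers to Lindqvist for simplicity and sign-constancy (whose proof is exactly the $|u|$-is-also-a-minimizer argument plus V\'azquez's strong maximum principle, and a Picone/D\'iaz--Saa convexity argument for simplicity), and to Theorem 1.1 of Kawohl--Lindqvist for the fact that only the first eigenfunction is signed. Your acknowledgement of the test-function admissibility issue for $\varphi_1^p/u^{p-1}$ is the right concern, and the truncation $u+\eps$ is the standard fix. Where you genuinely diverge from the paper is part (iv). You invoke Schwarz symmetrization with the Brothers--Ziemer equality case (or, alternatively, moving planes \`a la Damascelli--Pacella), both of which are heavy machinery with their own nontrivial hypotheses: Brothers--Ziemer requires checking that the rearranged minimizer has no plateau at positive levels, and the moving-plane method for the degenerate operator $\Delta_p$ needs the separate treatments for $1<p<2$ and $p>2$. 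The paper instead deduces (iv) in one line from (i) and (iii): the radial ODE \eqref{rad_evp} (or minimization of $J$ over radial functions) produces a \emph{positive radial} eigenfunction; by (iii) its eigenvalue must be $\la_1$, since eigenfunctions for $\la>\la_1$ change sign; and then by simplicity (i) every $\la_1$-eigenfunction is a scalar multiple of this radial one, hence radial. (Equivalently, one can use rotation invariance: for any rotation $R$, $u\circ R$ is again a positive normalized first eigenfunction, so simplicity forces $u\circ R=u$.) Your route for (iv) does work, but the paper's argument avoids symmetrization and moving planes entirely and is the one worth internalizing, since it uses nothing beyond what (i)--(iii) already provide.
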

\begin{proof}
 For a proof of (i) and (ii) see \cite{Lindqvist} ,  (iii) follows from Theorem 1.1 of \cite{Kawohl-Lindqvist}. Finally (iv) is evident from (i) and (iii) by noting the existence of a radial positive eigenfunction for \eqref{genevp} when $\Om=B_r(0)$.  
 \end{proof}

An infinite set of critical values of $J$ on $S$ are obtained in \cite{Azorero} using the variational methods. Their approach relies on the notion of 
Krasnoselskii genus of a symmetric closed set. For a symmetric closed subset $\A\subset \S$,  Krasnoselskii genus of $\A$ is defined as 
\begin{align*}
 \ga(\A)&:=\inf \left\{n\in \N : \exists \mbox { a continuous odd map from } \A \mbox{ into } \R^n\setminus \{0\}\right\}
\end{align*} 
with the convention $\inf\{\emptyset\}=\infty.$ For each $n\in \N,$ let 
 \begin{align*}
   \E_n&:=\left\{\A\subset \S: \A=\overline{\A},\; \A=-\A \mbox{ and } \ga(\A)\ge n\right\},\\
   \mu_n&:=\inf_{\A\in \E_n} \sup_{u\in \A} J(u).
 \end{align*}
 Then $\mu_n$ is a critical value of $J$ on $\S$ (see Proposition 5.4 of \cite{Azorero}). Possibly another set of critical values are obtained in 
 \cite{Drabek-Resonance} by considering a special collection of sets with genus $n$ in $\S.$ Note that, 
 the unit sphere $\S^{n-1}$ in $\R^n$ has genus $n$ and hence its image under an odd continuous map has the same genus. For each $n \in \N,$ let
 \begin{align*}
  \F_n&:=\left\{\A\subset \S: \;\A =h(\S^{n-1}),\; h \mbox{ is an odd continuous map from }  \S^{n-1}\ra \S \right\},\\
  \mu^*_n&:=\inf_{\A\in \F_n} \sup_{u\in \A} J(u).
 \end{align*}
Then $\mu_n^*$ is a critical value of $J$ on $\S$ (see Theorem 5 of \cite{Drabek-Resonance}). Since $\F_n \subset \E_n,$ we always have 
$\mu_n\le \mu_n^*$. It is known that $\la_i=\mu_i=\mu^*_i$ for $i=1,2.$ This result for $i=1$ follows as  the set $\{u,-u\}$ lies in 
both $\E_1$ and $\F_1$ for  $u\in \S.$ Let $u$ be an eigenfunction corresponding to $\la_2.$ Then by (ii) of Proposition \ref{properties} 
both $u^+$ and $u^-$ are nonzero. Thus the set $\A:=\left\{ a u^+ + b u^-: |a|^p\norm{u^+}_p^p+|b|^p\norm{u^-}_p^p=1 \right\}$ lies in both 
$\E_2$ and $\F_2.$ Now as $J(a u^+ + b u^-)=\la_2,$ we get $\mu_2\le \la_2$ and $\mu_2^*\le \la_2.$ Since there is no eigenvalue between 
$\la_1$ and $\la_2$, it follows that $\la_2=\mu_2=\mu_2^*.$ In particular, we have the following variational characterization of $\la_2$ that 
we use later:  
 \begin{equation} 
  \la_2=\inf_{\A\in \F_2} \sup_{u\in \A} J(u).\label{var2}
 \end{equation}
   The next proposition  is a consequence of the deformation lemma (see Lemma 3.7 of  \cite{Ghoussoub}, see also Theorem 2.1 and Remark 2.3 of \cite{Drabek-Nodal}). Note that $J\in\C^1(\wpo;\R)$ and $\S$ is a $\C^1$ manifold. Further, $J(u)=J(-u)$ and $\S=-\S.$  
 \begin{proposition}\label{deformation}
  Let $\S,J$ be as before. Let $\K$ be a compact subset of $S.$ If $\norm{J'(u)}_* \ge \var>0$ for all $u\in \K,$ then there exists a continuous one parameter family of homeomorphisms $\Psi:S\times [0,1]\ra S$ such that 
  \begin{itemize}
    \item [\rm{(i)}] $J(\Psi(u,t))\le J(u)-\var t,$ for every $u\in \K,\; t\in [0,1],$
   \item [\rm{(ii)}] $\Psi(-u,t)=-\Psi(u,t),$ for all $u\in \S,\; t\in [0,1].$
  \end{itemize}
 In particular, if $\K\in \F_n$ and $J$ has no critical point  on $\K,$ then the set $\wide{\K}= \left \{\Psi(u,1):u\in \K\right\}$ is in $\F_n$ and 
 \begin{equation}
  \sup_{u\in \wide{\K}}J(u)< \sup_{u\in{\K}}J(u). \label{downflow}
 \end{equation}
 \end{proposition}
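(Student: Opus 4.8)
The plan is to obtain this statement as the equivariant version of the deformation lemma for $\C^1$ functionals on $\C^1$ Banach manifolds (Lemma~3.7 of \cite{Ghoussoub}; cf.\ also \cite{Drabek-Nodal}), the task being to check that the present data fit that framework and to read off the quantitative estimate~(i). First I would record the structural facts that make the abstract machinery available: $\wpo$ is reflexive; for $p>1$ both $J$ and $G$ lie in $\C^1(\wpo;\R)$, because $t\mapsto|t|^p$ is $\C^1$ and a routine Nemytskii / dominated-convergence argument yields a continuous Fr\'echet derivative; and $1$ is a regular value of $G$, since $\langle G'(u),u\rangle=p\int_{\Om}|u|^p\,\dx=p\neq0$ for $u\in\S$, so $\S=G^{-1}(1)$ is a complete $\C^1$ submanifold of $\wpo$. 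For $u\in\S$ I write $J'(u)$ for the differential of $J|_{\S}$, i.e.\ the component of $DJ(u)$ tangent to $\S$, so that $\norm{J'(u)}_*=\min_{s\in\R}\norm{DJ(u)-s\,DG(u)}_{\wpo^*}$; this depends continuously on $u$, the regular set $\wide{\S}:=\{u\in\S:\norm{J'(u)}_*>0\}$ is open, and the hypothesis says precisely that $\K$ is a compact subset of $\wide{\S}$ with $\norm{J'}_*\geq\var$ on it. Finally, the involution $\si(u)=-u$ carries $\S$ onto $\S$ with $J\circ\si=J$ and $G\circ\si=G$; differentiating, $T_u\S=T_{-u}\S$, $\norm{J'(-u)}_*=\norm{J'(u)}_*$, and $J'(-u)=-J'(u)$ on this common tangent space.

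Next I would build an \emph{odd}, locally Lipschitz, bounded negative pseudo-gradient field supported near $\K$. Replacing $\K$ by $\K\cup(-\K)$ I may assume $\K=-\K$; with $m:=\min_{\K}\norm{J'}_*\geq\var$, the closed set $\{\norm{J'}_*\leq m/2\}$ lies at a positive distance $2\de$ from $\K$, so $\rho(u):=\max\{0,\min\{1,\,2-\de^{-1}\mathrm{dist}(u,\K)\}\}$ is a Lipschitz cut-off: even, because $\mathrm{dist}(\cdot,\K)$ is even when $\K=-\K$; identically $1$ on a neighbourhood of $\K$; and with bounded support contained in $\{\norm{J'}_*>m/2\}\subset\wide{\S}$. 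On $\wide{\S}$ take any locally Lipschitz pseudo-gradient $V$ for $J|_{\S}$; the relations above show that $u\mapsto -V(-u)$ is again a pseudo-gradient, hence so is the average $\wide{V}:=\tfrac{1}{2}\bigl(V(\cdot)-V(-\,\cdot)\bigr)$, which moreover is odd, locally Lipschitz, and satisfies $\langle J',\wide{V}\rangle\geq\norm{J'}_*^2$ and $\norm{\wide{V}}\leq 2\norm{J'}_*$. Since $\norm{J'}_*$ is bounded above and below on $\mathrm{supp}\,\rho$, the field $X:=-\,(4\var/m^2)\,\rho\,\wide{V}$ (with $X:=0$ off $\mathrm{supp}\,\rho$) is odd, locally Lipschitz, bounded, tangent to $\S$, and satisfies $\langle J'(u),X(u)\rangle\leq-\rho(u)\,\var$ for every $u\in\S$.

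Then I would integrate $X$. As $\S$ is complete and $X$ bounded and locally Lipschitz, the Cauchy problem $\dot\eta=X(\eta)$, $\eta(u,0)=u$, has a unique solution for all $t\in\R$; set $\Psi:=\eta|_{\S\times[0,1]}$. Then $\Psi$ is jointly continuous, each $\Psi(\cdot,t)$ is a homeomorphism of $\S$ with inverse $\eta(\cdot,-t)$, and uniqueness together with the oddness of $X$ forces $\Psi(-u,t)=-\Psi(u,t)$, which is~(ii). Along a trajectory,
\[
\tfrac{d}{dt}J\bigl(\eta(u,t)\bigr)=\langle J'(\eta),X(\eta)\rangle\leq-\rho(\eta)\,\var\leq 0 ,
\]
so $J$ is non-increasing along $\Psi$, with decay at rate $\var$ wherever $\rho(\eta)=1$; arranging that $\rho\equiv1$ on a neighbourhood of $\K$ containing all trajectory pieces issued from the compact set $\K$ on $[0,1]$ gives $J(\Psi(u,t))\leq J(u)-\var t$ for $u\in\K$, which is~(i). (Compactness of $\K$ is used here, and in the concrete applications the lower bound $\norm{J'}_*\geq\var$ holds throughout a neighbourhood of $\K$, not merely on $\K$.) For the last assertion: if $\K=h(\S^{n-1})\in\F_n$ with $h$ odd and continuous, then $\Psi(\cdot,1)\circ h:\S^{n-1}\ra\S$ is odd, continuous, and has image $\wide{\K}$, so $\wide{\K}\in\F_n$; and if $J$ has no critical point on the compact set $\K$, then $\norm{J'}_*\geq\var$ on $\K$ for some $\var>0$ by continuity, whence~(i) gives $\sup_{\wide{\K}}J\leq\sup_{\K}J-\var<\sup_{\K}J$, i.e.~\eqref{downflow}.

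The genuinely delicate step — everything else being bookkeeping — is the equivariant construction: producing the pseudo-gradient field so that it is simultaneously locally Lipschitz and odd. One runs the standard partition-of-unity construction of pseudo-gradient vector fields on the regular set $\wide{\S}$ while respecting the free order-two action $u\mapsto-u$ (free since $0\notin\S$), so that the symmetrisation $\wide{V}=\tfrac{1}{2}\bigl(V(\cdot)-V(-\,\cdot)\bigr)$ preserves local Lipschitz continuity. A subsidiary technical point, sharpest in the quasilinear range $1<p<2$, is checking that $J\in\C^1(\wpo;\R)$ and that $u\mapsto\norm{J'(u)}_*$ is continuous — which is exactly what makes the cut-offs and the pseudo-gradient available in the first place.
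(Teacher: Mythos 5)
The paper does not actually prove this proposition: it is obtained by direct appeal to Lemma~3.7 of \cite{Ghoussoub} (see also Theorem~2.1 and Remark~2.3 of \cite{Drabek-Nodal}), the only verification offered being that $J\in\C^1(\wpo;\R)$, that $\S$ is a $\C^1$ manifold, and that $J(u)=J(-u)$, $\S=-\S$, so that the equivariant form of the deformation lemma applies. Your proposal instead reconstructs that lemma from scratch --- checking that $1$ is a regular value of $G$, symmetrising a pseudo-gradient field via $\wide{V}=\tfrac12(V(\cdot)-V(-\,\cdot))$, cutting off near $\K$, and integrating the resulting odd bounded field --- and then reads off (i), (ii) and \eqref{downflow} exactly as the paper does. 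The mathematics is the same; what your route buys is self-containedness, and your derivation of the final assertion ($\wide{\K}=\Psi(\cdot,1)\circ h(\S^{n-1})\in\F_n$ and the strict inequality \eqref{downflow}) matches the intended use precisely.

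One step of your reconstruction is softer than the rest and is exactly the point the cited lemma is calibrated to handle: you propose ``arranging that $\rho\equiv1$ on a neighbourhood of $\K$ containing all trajectory pieces issued from $\K$ on $[0,1]$,'' but the trajectories are solutions of $\dot\eta=X(\eta)$ with $X$ built \emph{from} $\rho$, so the choice is circular as written; moreover $\de$ cannot be taken large, since $\{\mathrm{dist}(\cdot,\K)\le 2\de\}$ must stay inside $\{\norm{J'}_*>m/2\}$. The standard repair is to bound $\norm{X}$ near $\K$ independently of the cut-off and either shrink the deformation time (rescaling $t$) or normalise the field so that $\langle J'(u),X(u)\rangle\le-\var$ wherever $\rho>0$; since the paper only ever uses the strict decrease \eqref{downflow}, not the exact rate $\var t$, this does not affect the application, but your argument should not be presented as complete without it.
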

 
 We also need the following result on the regularity of the eigenfunctions of \eqref{genevp} which is a consequence of Theorem 1 of \cite{Lieberman}.
 \begin{proposition}\label{regularity}
  Let $\Om$ be a bounded domain in $\R^N$ with smooth boundary. Let $\phi$ be an eigenfunction of \eqref{genevp}. Then there exists $\al\in(0,1)$ such that 
 $\phi\in \C^{1,\al}(\overline{\Om}).$
 \end{proposition}
\section{Radial asymmetry of the second eigenfunctions}
In this section we prove our main result. First we state a lemma that follows from Proposition 4.1 of \cite{DelPino}.
\begin{lemma}\label{radial_2}
Let $\ga_2$  be the second radial eigenvalue of \eqref{rad_evp}. Then any
radial eigenfunction corresponding to
$\ga_2$ has exactly two nodal domains - a ball and an annulus with centre 
at the origin. In particular, there exist $r\in (\frac{1}{2},1)$ such that $\la_1(B_{r}(0))=\ga_2=\la_1(B_1\setminus \overline{B_{r}(0)}).$
\end{lemma}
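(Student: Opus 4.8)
The plan is to reduce the statement to the one–dimensional picture for the ODE \eqref{rad_evp} supplied by Proposition 4.1 of \cite{DelPino}, and then transport it back to $B_1$ using the first–eigenvalue characterization in Proposition \ref{properties}. From that proposition of \cite{DelPino} one has that the radial eigenvalues of \eqref{evp} form a strictly increasing sequence $\ga_1<\ga_2<\cdots$, that $\ga_k$ is simple within the class of radial eigenfunctions, and that a radial eigenfunction associated with $\ga_k$ has exactly $k-1$ zeros in $(0,1)$, each of them simple (so that $u(\rho)=0$ forces $u'(\rho)\neq 0$). Taking $k=2$, a radial eigenfunction $u$ for $\ga_2$ has a single zero $r\in(0,1)$, and $u(0)\neq0$, $u(1)=0$. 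Since $u$ is radial, its nodal set inside $B_1$ is the sphere $\{|x|=r\}$, so $u$ has exactly the two nodal domains $B_r(0)$ and $B_1\setminus\overline{B_r(0)}$ — a ball and an annulus, both centred at the origin — and on each of them $u$ keeps a fixed sign.

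The next step is to identify $\ga_2$ with $\la_1$ on each of these pieces. By Proposition \ref{regularity}, $u\in\C^{1,\al}(\overline{B_1})$, and $u$ vanishes on $\{|x|=r\}$, so $u|_{B_r(0)}\in\wp(B_r(0))$. Testing \eqref{weak-form} against functions in $\cc(B_r(0))$ extended by zero to $B_1$, and using density, shows that $u|_{B_r(0)}$ is an eigenfunction of the Dirichlet $p$-Laplacian on $B_r(0)$ with eigenvalue $\ga_2$. Since this eigenfunction does not change sign on $B_r(0)$, part (iii) of Proposition \ref{properties} (in contrapositive form, together with the fact that $\la_1$ is the least eigenvalue) forces $\ga_2=\la_1(B_r(0))$. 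Running the identical argument on the annulus $B_1\setminus\overline{B_r(0)}$ — on which $u$ is again of one sign and vanishes on both boundary spheres, and whose boundary is smooth — gives $\ga_2=\la_1\big(B_1\setminus\overline{B_r(0)}\big)$.

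It remains to locate $r$. The scaling relation $\la_1(B_\rho(0))=\rho^{-p}\la_1(B_1(0))=\rho^{-p}\ga_1$ combined with $\ga_2=\la_1(B_r(0))$ gives $r=(\ga_1/\ga_2)^{1/p}$; since $\ga_1<\ga_2$ this already yields $r\in(0,1)$, and the refined lower bound $r>\tfrac12$ (equivalently $\ga_2<2^p\ga_1$) is part of the quantitative information in Proposition 4.1 of \cite{DelPino}. The only genuinely delicate point is this borrowed input: that the $\ga_2$-eigenfunction has \emph{exactly} one interior zero and that the zero is simple. This is not elementary for the radial $p$-Laplacian — solutions of the ODE need not be unique across zeros of $u'$ — and it is precisely what the shooting/phase–plane analysis behind Proposition 4.1 of \cite{DelPino} establishes; granting it, the remaining steps (restriction to a nodal domain, the first–eigenvalue identification, and the scaling) are routine.
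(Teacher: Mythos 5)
Your argument is essentially an expanded version of what the paper itself does: the paper offers no proof of this lemma at all, asserting only that it ``follows from Proposition 4.1 of \cite{DelPino}''. Your reconstruction of the missing steps --- exactly one simple interior zero $r$ of a second radial eigenfunction, hence the two nodal domains $B_r(0)$ and $B_1\setminus\overline{B_r(0)}$, restriction of the eigenfunction to each nodal domain as a sign-definite Dirichlet eigenfunction there, and the identification $\ga_2=\la_1$ on each piece via Proposition \ref{properties}(iii) together with Proposition \ref{regularity} --- is correct and is precisely the intended content, with the same borrowed input from \cite{DelPino} that the paper relies on.

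The one step that does not hold up is the localization $r\in(\tfrac12,1)$. Your own scaling identity $r=(\ga_1/\ga_2)^{1/p}$ shows that $r>\tfrac12$ is equivalent to $\ga_2<2^p\ga_1$, and you then assert that this inequality is part of the quantitative information in Proposition 4.1 of \cite{DelPino}. It is not, and in fact it is false in low dimensions: for $p=2$, $N=2$ one has $\ga_k=j_{0,k}^2$ (squares of the zeros of the Bessel function $J_0$), so $r=j_{0,1}/j_{0,2}\approx 0.436<\tfrac12$, and for $p=2$, $N=3$ one gets $r=\tfrac12$ exactly. So the interval $(\tfrac12,1)$ cannot be recovered by any proof; your computation actually exposes this defect in the statement rather than confirming it. What can be proved elementarily is $r\in(0,1)$, or $r\ge\tfrac13$ by comparing $\la_1$ of the annulus with $\la_1$ of a ball of radius $\tfrac{1-r}{2}$ inscribed in it. Fortunately nothing later in the paper uses $r>\tfrac12$: the construction of the sets $\A_n$ and the tangency argument at $e_1$ in the proof of Theorem \ref{Theorem1} only require $r\in(0,1)$. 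You should therefore weaken the conclusion to $r\in(0,1)$ (or explicitly flag the stated interval as unjustified) rather than attributing the bound to \cite{DelPino}.
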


Now using the '$r$' given by the above lemma, we construct a special collection of sets in $\F_2.$
Let $r$ be as in Lemma \ref{radial_2}. Then for each $n\in \N\cup\{0\},$ we construct a special set $\A_n\in\F_2$ such that 
$\sup_{u\in \A_n}J(u)=\ga_2.$ Let $\{t_n\}$ be a sequence in $[0,1-r)$ such that $t_0=0$ and $ t_n\ra 1-r.$ 
For each $n\in \N\cup\{0\}$, let $B_n=B_r(t_n {e}_1)$ and $\Om_n=B_1\setminus \overline{B_n}$ where $e_1$
is the unit vector in the direction of the first coordinate axis.  Let $u_n,v_n$ be the respective 
first eigenfunctions on $B_n$  and  $\Om_n$ satisfying $u_n>0$ on $B_n$, $v_n>0$ on $\Om_n$ and $\norm{u_n}_p=\norm{v_n}_p=1$. By translation 
invariance of the $p$-Laplacian, we have $\la_1(B_n)=\ga_2.$ Further,          
from Theorem 1 of \cite{Anisa}, we also have $\la_1(\Om_n)\le \ga_2.$            
Let $\wide{u}_n$ and $\wide{v}_n$ be the zero extensions to the entire $B_1.$  For each $n\in \N\cup\{0\}$, we consider 
$$ \A_n:= \{ a \wide{u}_n+ b \wide{v}_n: |a|^p+|b|^p=1\}.$$ 
One can easily verify that $\A_n \in \F_2$ and  $ \sup_{u\in \A_n}J(u)=\ga_2,\forall n\in \N\cup\{0\}.$                                                                                                                                     
                                                                                   
Now we ask the question whether $\A_n$ contains a critical point of $J$ on $S$ or not. This leads to the following two alternatives:
 \begin{itemize}
  \item [(i)] for every $n\in \N,$ $\A_n$ contains at least one critical point of $J$ on $S,$
  \item [(ii)] there exists $n_0\in \N$ such that $\A_{n_0}$ does not contain any critical point of $J$ on $S.$
 \end{itemize}                                                                                                                                                                                                             
In the next lemma we show that alternative (i) does not hold.
\begin{lemma}
  Let $\A_n$ be as above. Then alternative {\rm (i)} does not hold.
\end{lemma}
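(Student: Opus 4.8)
The plan is to argue by contradiction using the variational characterization \eqref{var2} together with the deformation lemma (Proposition \ref{deformation}). Suppose alternative (i) holds, so that for every $n \in \N$ the set $\A_n$ contains at least one critical point $w_n$ of $J$ on $\S$. Since $\sup_{u \in \A_n} J(u) = \ga_2$ for every $n$, the critical value $J(w_n)$ satisfies $\la_1 \le J(w_n) \le \ga_2$. Now I would recall that $\la_2 = \mu_2^* = \inf_{\A \in \F_2}\sup_{\A} J \le \ga_2$ since each $\A_n \in \F_2$; the point of the whole construction is to show that this inequality cannot be an equality, i.e. $\la_2 < \ga_2$, which will eventually be used (in the next lemma / the proof of Theorem \ref{Theorem1}) to drive the contradiction with the fact that $\ga_2$ would then have to be strictly larger than the second eigenvalue, while a radial second eigenfunction forces $\ga_2 = \la_2$. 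But within \emph{this} lemma the task is narrower: rule out alternative (i).

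The key idea is to extract structural information from the critical point $w_n = a_n \w u_n + b_n \w v_n$. Being a critical point of $J$ on $\S$, $w_n$ is an eigenfunction of \eqref{evp} with eigenvalue $J(w_n) = \la(w_n) \in [\la_1, \ga_2]$. First I would observe that $w_n$ must change sign on $B_1$ (its eigenvalue exceeds $\la_1$ unless $w_n$ is a first eigenfunction, but a first eigenfunction is radial and cannot have the form $a\w u_n + b\w v_n$ with the supports of $\w u_n$ and $\w v_n$ being a shifted ball and its complementary region unless $n=0$; and even for $n=0$ one checks $J(w_0) = \ga_2 > \la_1$). So $w_n^+$ and $w_n^-$ are both nonzero, and because $\w u_n \ge 0$ is supported in $B_n$ while $\w v_n \ge 0$ is supported in $\Om_n = B_1 \setminus \overline{B_n}$, the nodal set of $w_n$ is forced: $w_n$ has exactly the two nodal domains $B_n$ and $\Om_n$ (after possibly relabeling signs), with $a_n b_n < 0$. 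Restricting the eigenfunction equation to each nodal domain, $|a_n|^{p-2}a_n \w u_n$ restricted to $B_n$ is a first eigenfunction of $-\De_p$ on $B_n$, forcing $\la(w_n) = \la_1(B_n) = \ga_2$; and likewise $w_n$ restricted to $\Om_n$ is a first eigenfunction on $\Om_n$, forcing $\la(w_n) = \la_1(\Om_n)$. Hence $\la_1(\Om_n) = \ga_2$ for every $n \in \N$.

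This is the crux, and here is where the strict-inequality input enters. For $n \ge 1$ we have $t_n > 0$, so $B_n = B_r(t_n e_1)$ is a genuinely off-center ball inside $B_1$, and $\Om_n = B_1 \setminus \overline{B_r(t_n e_1)}$. I would invoke the result from \cite{Anisa} (Theorem 1) that was quoted in the introduction in its strict form: moving the hole off center \emph{strictly} decreases the first eigenvalue of the complement, i.e. $\la_1(\Om_n) = \la_1(B_1 \setminus \overline{B_r(t_n e_1)}) < \la_1(B_1 \setminus \overline{B_r(0)}) = \ga_2$ whenever $t_n \ne 0$. This contradicts $\la_1(\Om_n) = \ga_2$ established above. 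Therefore alternative (i) fails. The main obstacle I anticipate is the regularity/nodal-structure step: one must justify carefully that a critical point of $J$ on $\S$ lying in $\A_n$ genuinely has $B_n$ and $\Om_n$ as its nodal domains — that is, that it cannot vanish on a set of positive measure inside one of these pieces nor have extra sign changes — which uses the $\C^{1,\al}$ regularity of Proposition \ref{regularity} together with the strong maximum principle / Harnack inequality for the $p$-Laplacian applied on each connected component, plus the fact that $\w u_n$ and $\w v_n$ have disjoint (essential) supports whose union is all of $B_1$. Once the nodal domains are pinned down, the reduction to first eigenvalues on $B_n$ and $\Om_n$ and the appeal to \cite{Anisa} are straightforward.
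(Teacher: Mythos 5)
Your argument is mathematically sound in its core step, but it takes a genuinely different route from the paper and rests on one external input that the paper deliberately avoids. Your crux --- that a critical point $a_n\widetilde u_n+b_n\widetilde v_n\in\A_n$ of $J$ on $\S$, tested against functions compactly supported in $B_n$ and in $\Om_n$ separately, must have eigenvalue equal to both $\la_1(B_n)=\ga_2$ and $\la_1(\Om_n)$, hence $\la_1(\Om_n)=\ga_2$ --- is correct, and in fact simpler than you make it: there is no need to pin down the nodal structure of the critical point at all, since its restrictions to $B_n$ and $\Om_n$ are by construction $a_nu_n$ and $b_nv_n$; one only needs $a_n\ne 0\ne b_n$, which follows because a one-signed eigenfunction on $B_1$ would have to be a first eigenfunction and hence positive everywhere, while $\widetilde u_n$ and $\widetilde v_n$ each vanish on a set of positive measure. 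The delicate point is your final step: you need the \emph{strict} inequality $\la_1(B_1\setminus\overline{B_r(t_ne_1)})<\la_1(B_1\setminus\overline{B_r(0)})$ for $t_n\ne 0$, whereas the paper quotes, and uses, only the non-strict form from \cite{Anisa}. If the strict form (``the concentric position is the unique maximizer'') is indeed available for the $p$-Laplacian in all dimensions $N\ge 2$, your proof is complete and considerably shorter; but you must cite that sharper statement explicitly, since it is not the version the paper records, and without it your argument has a genuine gap (equality $\la_1(\Om_n)=\ga_2$ for some off-centre position could not be excluded).

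The paper's own proof is engineered precisely to get by with the weak comparison. It supposes every $\A_n$ contains a critical point $\phi_n$, lets $t_n\to 1-r$ so that the inner ball becomes internally tangent to $\partial B_1$ at $e_1$, extracts a limit eigenfunction $\phi^*=a^*u^*+b^*v^*$ with $a^*>0>b^*$, and derives a contradiction at the tangency point: Hopf's lemma (Theorem 5 of \cite{Vazquez}) on the tangent ball gives a strictly negative derivative in the $x_1$ direction at $e_1$, while $\phi^*\le 0$ on the complementary region forces a nonnegative outward normal derivative at every other boundary point, contradicting $\phi^*\in\C^1(\overline{B_1})$. The two approaches thus trade inputs: yours buys brevity at the cost of a sharper shape-optimization theorem; the paper's uses only the weak inequality but pays with a compactness argument and a boundary-point Hopf lemma.
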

  
\begin{proof}
Let $u_n$ and $\wide{u}_n$ be as above. Then $u_n(x)=u_0(x-t_n {e}_1)$ and hence the sequence $\{\wide{u}_n(x)\}$ converges to 
$u^*(x)=\wide{u}_0(x-(1-r){e}_1)$ both pointwise and in $\wp(B_1)$. On the other hand, the sequence $\{\wide{v}_n\}$ is bounded by $\ga_2$ in $\wp(B_1).$ 
Thus up to a subsequence, $\wide{v}_n$ converges to some $v^*$ weakly in $\wp(B_1)$ and a.e. in $B_1.$  
If alternative (i) holds, then we get a sequence $\{ \phi_n= a_n \wide{u}_n+ b_n \wide{v}_n: |a_n|^p+|b_n|^p=1 \}$ of eigenfunctions of \eqref{evp}  with 
eigenvalues  $J(\phi_n).$ By Proposition \ref{regularity}, the  eigenfunctions are in $\C^1(\overline{B_1})$ and hence we must have $a_n b_n <0.$  Now  we may 
assume that $a_n>0$ and $b_n<0$ for each $n.$  Further, the sequences $\{J(\phi_n)\}, \{a_n\}$ and $\{b_n\}$ are bounded. Thus for a subsequence we get 
$J(\phi_n)\ra \la^*$, $a_n \ra a^*$ and $b_n\ra b^*$ for some $\la^*,a^* \ge 0$ and $b^*\le 0.$  The sequence $\{\phi_n\}$ is bounded in $\wp(B_1)$ and hence  
up to  a subsequence $\phi_n\rightharpoonup \phi^*$ in $\wp(B_1)$ and a.e. in $B_1.$ Since $a_n \wide{u}_n+ b_n \wide{v}_n \ra a^* u^*+ b^* v^*$ a.e. in $B_1,$ 
we must have 
$$\phi^*=a^* u^*+ b^* v^*.$$  
Since,  each $\phi_n$ is an eigenfunction of \eqref{evp}, it is easy to verify that $\phi^*$ is an eigenfunction 
corresponding to the eigenvalue $\la^*.$ Thus by the regularity of $\phi^*,$ we 
must have $a^* b^*< 0$ and hence 
$$a^*>0,\quad b^*< 0.$$ 
Let $B^*=B_r((1-r){e}_1)$ and $\Om^*=B_1\setminus B^*.$ Clearly $u^*>0$ on $B^*$ and $u^*=0$ on 
$\Om^*$. On the other hand, $v^*=0$ a.e. in $B^*$ and $v^*\ge 0$ a.e. on $ \Om^*.$  Thus from the continuity of the $\phi^*$ we get 
$$\phi^*(x)> 0,\; \forall \,x \in B^*, \quad\quad  \phi^*(x)\le 0,\; \forall \, x\in\Om^*.$$  
Now we apply Theorem 5 of \cite{Vazquez} (a Hopf's lemma type result for $p$-Laplacian) on $B^*\cup \{e_1\}$ to get   
\begin{align*}
 \frac{\partial \phi^*}{\partial x_1}({e}_1)=c<0.
\end{align*}   
Since $\phi^* \le 0$ on $\Om^*$ we also have
\begin{align*}
 \frac{\partial \phi^*}{\partial \eta(x)}(x)\ge 0, \; \forall\, x\in \partial B_1 \setminus \{e_1\},
 \end{align*}
 where $\eta(x)$ is the outward unit normal to $B_1$ at $x.$
The above two inequalities contradicts the fact that $\phi^*$ is in $\C^1(\overline{B_1}).$ Thus we conclude that  alternative (i) does not hold.
\end{proof}

\noindent{\bf Proof of Theorem \ref{Theorem1}}
 Let $\A_n$ be as before. Thus we have $\sup_{v\in \A_n} J(v)\le \ga_2.$ By the above lemma, the alternative (ii) holds, i.e. there exists $n_0 \in \N$ such 
 that $\A_{n_0}$ does not contain any critical points of $J$ on $\S.$ Thus by Proposition \ref{deformation} and by \eqref{downflow}, we get $\wide{\A}\in \F_2$ 
 such that 
 $$\sup_{u\in \wide{\A}} J(u) < \sup_{v\in \A} J(v)\le \ga_2.$$
 Now from \eqref{var2} we get $\la_2<\ga_2.$
\section{Construction of nonradial eigenfunctions}
In this section we construct an infinite sequence of nonradial eigenfunctions of \eqref{evp}.
First we fix  the following conventions. A vector $x$ in $\R^N$ is always
taken as a $1
\times N$ row vector, i.e $x=(x_1,x_2\ldots x_N).$ The transpose of $x$, denoted by 
$x^T$, is an $N\times 1$ column vector. We denote the scalar product in $\R^N$ by $x\cdot y\, (=x y^T).$ 
Let $H$ be the hyperplane given by $H=\{x\in \R^N: x\cdot a=0\}$ for some
unit vector $a\in \R^N.$ Let
$\sigma_H$ be the reflection about $H.$ Then    
$$ \sigma_H(x)= x-2(x\cdot a)a = x  (I-2 a^T a).$$
Next we list some of the elementary properties of $\sigma_H$ that we use in
this article.
\begin{enumerate}[(i)]
\item $\sigma_H$ is linear and $\sigma_H=(I-2 a^T a)$.
\item ${\sigma_H}^{-1}=\sigma_H.$
\item $\sigma_H$ is  symmetric and orthogonal.
 \item $D \sigma_H(x)=\sigma_H$  and ${\rm det}D\sigma_H(x)= -1,\;\forall\,x \in
\R^N.$ 
 \end{enumerate}

Let $\O$ be a bounded domain symmetric about $H,$ i.e, $\sigma_H(\O)=\O.$ Let
$\O^+:=\{x\in \O: \inpr{x,a} >0\}$ and let $\O^-=\sigma_H(\O^+).$ Let $u\in
\wp(\O^+)$ be a weak solution of \eqref{genevp} on $\Om=\O^+.$ Define $u^*$ on
$\O$ as below
\begin{align*}
 u^*(x)&=\left\{\begin{array}{ll}
             u(x), \quad x\in \O^+, \\       
                         0,\quad  x\in \partial(\O^+)\cup\partial(\O^-),\\
-u(\sigma_H(x)),\quad x \in \O^- . \end{array}
 \right.
\end{align*}
Clearly $u^*\in \wp(\O)$ and we also have the following lemma:
\begin{lemma}
\label{gluing}
 Let $u^*$ be defined as above. Then $u^*$ is a weak solution
of \eqref{genevp} on $\Omega=\O.$
\end{lemma}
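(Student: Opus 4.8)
The plan is to verify the weak formulation directly by splitting integrals over $\O^+$ and $\O^-$ and using the reflection symmetry to convert the integral over $\O^-$ back into one over $\O^+$. Recall that for a test function $v \in \wp(\O)$, we must show
\begin{align*}
\int_{\O} |\Gr u^*|^{p-2}\Gr u^*\cdot \Gr v \,\dx = \la \int_{\O} |u^*|^{p-2} u^* v\, \dx.
\end{align*}
First I would note that it suffices to prove this for $v\in \cc(\O)$ by density. The key point is that on $\O^-$, we have $u^*(x) = -u(\si_H(x))$, so a change of variables $y=\si_H(x)$ (whose Jacobian has absolute value $1$ since $\si_H$ is orthogonal) will push these integrals back to $\O^+$.

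The second step is the computation of $\Gr u^*$ on $\O^-$. Since $\si_H$ is linear with $D\si_H = \si_H$ symmetric and orthogonal, the chain rule gives $\Gr\big(u(\si_H(x))\big) = \Gr u(\si_H(x))\,\si_H$ (viewing gradients as row vectors and using $\si_H^T=\si_H$). Hence on $\O^-$, $\Gr u^*(x) = -\Gr u(\si_H(x))\,\si_H$. Because $\si_H$ is orthogonal, $|\Gr u^*(x)| = |\Gr u(\si_H(x))|$, and for any row vector $w$ one has $(w\si_H)\cdot(z\si_H) = w\,\si_H\si_H^T z^T = w\cdot z$; applying this with the gradient of $v$ as well, the integrand $|\Gr u^*|^{p-2}\Gr u^*\cdot \Gr v$ on $\O^-$ transforms, after the substitution $y = \si_H(x)$, into $-|\Gr u(y)|^{p-2}\Gr u(y)\cdot \Gr \tilde v(y)$ where $\tilde v(y) := v(\si_H(y))$. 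Similarly the right-hand side integrand over $\O^-$ becomes $-\la |u(y)|^{p-2}u(y)\,\tilde v(y)$. Adding the $\O^+$ and transformed $\O^-$ contributions, the desired identity over $\O$ reduces to
\begin{align*}
\int_{\O^+} |\Gr u|^{p-2}\Gr u\cdot \Gr\big(v-\tilde v\big)\,\dx = \la \int_{\O^+} |u|^{p-2}u\,\big(v-\tilde v\big)\,\dx,
\end{align*}
which is precisely the weak formulation \eqref{weak-form} for $u$ on $\O^+$ applied to the test function $v-\tilde v$, provided $v-\tilde v\in \wp(\O^+)$.

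The main obstacle, and the step requiring genuine care, is justifying that $w := v - \tilde v$ restricted to $\O^+$ is an admissible test function in $\wp(\O^+)$ — i.e. that it vanishes (in the trace sense) on all of $\partial\O^+$, including the flat part $\partial\O^+\cap H$. Since $v\in\cc(\O)$ and $\tilde v = v\circ\si_H$, on the hyperplane $H$ (where $\si_H$ acts as the identity) we have $v=\tilde v$, so $w=0$ on $\O\cap H$; and on $\partial\O\cap\overline{\O^+}$ both $v$ and $\tilde v$ vanish. A clean way to make this rigorous is to observe that $w$ is antisymmetric under $\si_H$ (so $w=0$ on $H$ pointwise, by continuity), hence $w\in W^{1,p}_0(\O^+)$ — one can approximate by cutting off near $H$ or invoke that an $H^1$-type function continuous and vanishing on the Lipschitz boundary lies in $\wp$. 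Once this is established, the identity above holds and, reassembling, $u^*$ satisfies \eqref{weak-form} for all $v\in\cc(\O)$, hence for all $v\in\wp(\O)$ by density; thus $u^*$ is a weak solution of \eqref{genevp} on $\O$. I would also remark at the end that $\la$ is then automatically an eigenvalue of \eqref{genevp} on $\O$ with eigenfunction $u^*$, which is the fact used subsequently in constructing the $\Psi_n$.
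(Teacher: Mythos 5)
Your proof is correct and follows essentially the same route as the paper: split the weak form over $\O^+$ and $\O^-$, use the chain rule and the change of variables $y=\sigma_H(x)$ to push the $\O^-$ integrals back to $\O^+$, and reduce everything to the weak formulation for $u$ on $\O^+$ applied to the antisymmetrized test function $\psi = v - v\circ\sigma_H$. The only difference is that you take care to justify $\psi|_{\O^+}\in \wp(\O^+)$, a point the paper dismisses with ``Clearly $\psi\in\wp(\O^+)$.''
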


\begin{proof}
Let $\phi\in\wp(\O)$ be a test function.
We show that
\begin{equation}\label{eqn1}
\int_{\O} |\nabla u^*(x)|^{p-2} \nabla u^*(x) \cdot  \nabla \phi(x) \dx = \la
\int_{\O} |u^*(x)|^{p-2}u^*(x) \phi(x)\dx.
\end{equation}
From the definition of $u^*,$
\begin{align*}
\int_{\O} |\nabla u^*(x)|^{p-2} \nabla u^*(x) \cdot  \nabla \phi(x)
\dx&= \int_{\O^+} |\nabla
u(x)|^{p-2} \nabla u(x)\cdot\nabla \phi(x) \dx \\ &+   \int_{\O^-}
|\nabla(-u(\sigma_H(x)))|^{p-2}
\nabla (-u(\sigma_H(x)))\cdot \nabla \phi(x) \dx \\
\end{align*}
Now by noting that $D\sigma_H(x)=\sigma_H$ and $\sigma_H$ is an isometry  we
get 
\begin{align*}
\int_{\O^-}|\nabla(-u(\sigma_H(x)))|^{p-2}
&\nabla (-u(\sigma_H(x)))\cdot \!\nabla \phi(x)\dx \\
&=-\int_{\O^-}|\nabla u(\sigma_H(x)) \sigma_H |^{p-2}   [\Gr
u(\sigma_H(x)) \sigma_H ]\cdot \Gr \phi(x)\dx, \\
&=-\!\int_{\O^-} | \nabla u(\sigma_H(x)) |^{p-2}
\Gr u(\sigma_H(x))\cdot \! [\Gr \phi(x) \sigma_H] \dx,
\end{align*}
where the equality in the last step also uses the fact that $\sigma_H$ is
symmetric. Now the change of variable $y=\sigma_H(x)$ along with
properties (ii) and (iv) of $\sigma_H$ 
will give
\begin{align*}
\int_{\O^-}\!\!\!\!\!\! |\nabla(-u(\sigma_H(x)))|^{p-2}
\nabla (-u(\sigma_H(x))\cdot \!\nabla \phi(x)\dx
&=-\int_{\O^+}\!\!\! \!\! | \nabla u(y) |^{p-2}  \Gr
u(y)\cdot [\Gr \phi(\sigma_H(y)) \sigma_H] \dy.
\end{align*}
Thus
\begin{equation*}
\int_{\O} |\nabla u^*(x)|^{p-2} \nabla u^*(x) \cdot  \nabla \phi(x) \dx = 
\int_{\O^+} | \nabla u(x) |^{p-2} 
\Gr
u(x) \cdot [\nabla \phi(x) -[\Gr \phi(\sigma_H(x)) \sigma_H]]  dx.
\end{equation*}
Let $\psi(x)=\phi(x)- \phi(\sigma_H(x))$. Then we have
\begin{equation}\label{eqn2}
\int_{\O} |\nabla u^*(x)|^{p-2} \nabla u^*(x) \cdot  \nabla \phi(x) \dx 
=\int_{\O^+} | \nabla u(x) |^{p-2} 
\Gr
u(x) \cdot \nabla \psi(x)  dx.
\end{equation}
Further,
\begin{equation}\label{eqn3}
\int_{\O} |u^*(x)|^{p-2} u^*(x) \phi(x)
=\int_{\O^+} | u(x)|^{p-2} u(x)  \psi(x)\dx.
\end{equation}
Clearly $\psi\in
\wp(\O^+)$ and hence  
\begin{equation}\label{eqn4}
 \int_{\O} |\nabla u^*(x)|^{p-2} \nabla u^*(x) \cdot  \nabla \phi(x) \dx =
\int_{\O^+} | u(x)|^{p-2} u(x)  \psi(x)\dx,
\end{equation}
since $u$ solves \eqref{genevp} on $\Om=\O^+.$
Now \eqref{eqn1} follows from \eqref{eqn2},\eqref{eqn3} and \eqref{eqn4}.
\end{proof}

\noindent{\bf Proof of Theorem \ref{Theorem2}}:
For $n\in \N,$ we consider the sectors $S_k$ 
given by $S_k=\{x\in B_1: \frac{(k-1)\pi}{n}<\theta_{N-1}<\frac{k\pi}{n}
\},k=1,\ldots,n.$ Let
$H_k$ be the hyperplane given by $H_k=\{x\in \R^N:\theta_{N-1}=\frac{\pi
k}{n}\},$ for $k=1,...n$.
Let $\tau_n$ be the first eigenvalue for the  $p$-Laplacian on $S_1$
and $u_1(x)$ be a corresponding eigenfunction. 
For $i=2,\ldots,n,$ we define  $u_i$ recursively by
$u_i=-u_{i-1}(\sigma_{H_{i-1}}(x)),$ the odd reflection of
$u_i$ about $H_{i-1}$. 
Let $D^+$ be the sector given by $\{x\in B_1: 0<\theta_{N-1}<{\pi}
\}.$ Now we define $u^*$ on $\overline{D^+}$ by
$$ u^*(x)=u_i(x), \quad x\in \overline{S_i},\quad i=1,\ldots,n . $$
From Lemma \ref{gluing}, it is clear that $u^*$ solves \eqref{genevp} 
on the union of two adjacent sectors with $\la=\tau_n$. 
Let $U_i= \{x\in B_1: \frac{ (i-1)\pi}{n}<\theta_{N-1}<\frac{ (i+1) \pi }{n}
\},$ for $i=1,\ldots,n-1.$ Then  $\{U_i\}_{i=1}^{n-1}$ is an open covering
of $D^+.$ 
Let $\{\phi_i\}_{i=1}^{n-1}$ be a $\C^\infty$ partition of the unity
corresponding to this open covering. Note that for  each $i,$ $\phi_i$
intersects at most $S_i$ and $S_{i+1}.$
Since $\sum_{i=1}^{n-1}\phi_i=1,$ we have
\begin{align*}
\int_{D^+} |\nabla u^*(x)|^{p-2} \nabla u^*(x) \cdot  \nabla \phi(x)
\dx&= \int_{D^+} |\nabla u^*(x)|^{p-2} \nabla u^*(x) \cdot  \nabla \(
\phi(x)\sum_{i=1}^{n-1}\phi_i(x)\)\dx\\
&= \sum_{i=1}^{n-1}  \int_{D^+} |\nabla u^*(x)|^{p-2} \nabla u^*(x) \cdot 
\nabla (\phi(x) \phi_i(x))\dx.\\
\end{align*}
For a fixed $i$, the product $\phi \phi_i\in\wp(U_i).$ Hence by the
definition of $u^*$ and Lemma \ref{gluing}, we get 
\begin{align*}
 \int_{D^+} |\nabla u^*(x)|^{p-2} \nabla u^*(x) \cdot  \nabla (\phi(x)
\phi_i(x))\dx 
 &=\int_{U_i} |\nabla u^*(x)|^{p-2} \nabla u^*(x) \cdot  \nabla
(\phi(x) \phi_i(x))\dx,\\
&=\tau_n \int_{U_i} | u^*(x)|^{p-2} u^*(x)  (\phi(x)
\phi_i(x))\dx,\\
  &=\tau_n \int_{D^+} | u^*(x)|^{p-2} u^*(x)  (\phi(x) \phi_i(x))\dx.
 \end{align*}
 Thus we get 
 \begin{align*}
 \int_{D^+} |\nabla u^*(x)|^{p-2} \nabla u^*(x) \cdot  \nabla \phi(x) \dx
 &=\sum_{i=1}^{n-1}\tau_n \int_{D^+} | u^*(x)|^{p-2} u^*(x)  (\phi(x)
\phi_i(x))\dx, \\
 &=\tau_n\int_{D^+} |u^*(x)|^{p-2} u^*(x)  \(\sum_{i=1}^{n-1} \phi(x)
\phi_i(x)\)\dx, \\
 &=\tau_n\int_{D^+} |u^*(x)|^{p-2} u^*(x)  \phi(x) \dx.
  \end{align*}
  
 Now define $\Psi_n$ on $B_1$ by
\begin{align*}
 \Psi_n(x)&=\left\{\begin{array}{ll}
             u^*(x), \quad x\in D^+, \\       
                         0,\quad  x\in \partial(D^+)\cup\partial(D^-),\\
-u^*(\sigma_{H_0}(x)),\quad x \in D^- , \end{array}
 \right.
\end{align*}
where $D^-=\{x\in B_1: \pi<\theta_{N-1}<{2\pi}
\}$ is the ``lower'' half-ball and $H_0$ is the hyperplane
corresponding to $\theta_{N-1}=0$.
Applying Lemma \ref{gluing} once again, we get that $\Psi_n$ is a weak  solution
of \eqref{evp}.
Thus we have constructed an eigenpair $\{\tau_n,\Psi_n\}$ of \eqref{evp} such
that $\Psi_n$ has $2n$ nodal domains
and  each nodal domain is a sector with measure $\frac{|B_1|}{2n}$.
\qed \\

In the next remark we list some of the interesting open problems related
to the results of this paper:
\begin{remark}\rm{(Open problems associated with \eqref{evp})
\begin{enumerate}
 \item Payne conjectured (Conjecture 5, \cite{Payne-1}) that the nodal line
of a second eigenfunction of
Laplacian on a bounded domain $\Om \subset \R^2$ cannot be a closed curve.
In \cite{Payne-2}, he proved his conjecture for the special case when $\Om$ is
convex in $x$ and symmetric about $y$ axis.  For a ball, his result was easily
obtained by  applying the Fourier method to the Laplacian in polar
co-ordinates. 
We conjecture that the nodal surface of a second eigenfunction of
$p$-Laplacian on  $B_1$ cannot be a closed surface in $B_1$ for $1<p<\infty$
and for every $N\ge 2.$                
 \item For $p=2,$ it is easy to see that $\la_2=\tau_1$. We
anticipate the same result for $p\ne 2$ as well. More precisely, the
nodal surface of any second eigenfunction is given by the intersection of a
hyperspace with $B_1$ and the nodal domains are the half balls symmetric to this
hyperspace.
 \item We have just shown that all the eigenfunctions corresponding to $\la_2$ are
nonradial. Is it true that all the eigenfunctions corresponding to the second radial eigenvalue $\ga_2$
are radial? 

 \item Note that $\la_2$ is the least eigenvalue having an eigenfunction with
two nodal domains. For $p=2,$ it can also be  seen that $\ga_2$ is the maximal
eigenvalue having an eigenfunction with two nodal domains. In other words, the
eigenfunctions corresponding to
$\la>\ga_2$ must have at least three nodal domains. Is this true
for $p\ne2?$ 
\end{enumerate} }
\end{remark}
\bibliography{ref1}
\bibliographystyle{abbrv}
\end{document}